\newtheorem{theorem}{Theorem} 	
\newtheorem{lemma}{Lemma}
\newtheorem{proposition}{Proposition}
\newtheorem{conjecture}{Conjecture}
\theoremstyle{definition}
\newtheorem{definition}{Definition}
\newtheorem{remark}{Remark}
\DeclareMathOperator{\tr}{\operatorname{tr}}
\DeclareMathOperator{\E}{\mathbb{E}}
\newcommand\scalemath[2]{\scalebox{#1}{\mbox{\ensuremath{\displaystyle #2}}}}
\title{Evidence of the Poisson/Gaudin--Mehta phase transition for banded matrices on global scales}
\author{Sheehan Olver\thanks{School of Mathematics and Statistics, The University of Sydney, NSW 2006, Australia (\texttt{sheehan.olver@sydney.edu.au})} \and 
Andrew Swan\thanks{Department of Applied Mathematics and Theoretical Physics, University of Cambridge, UK, (\texttt{acks2@cam.ac.uk})}}
\begin{document}
	\maketitle
		\begin{abstract}
	We prove that the Poisson/Gaudin--Mehta phase transition conjectured to occur when the bandwidth of an $N \times N$ symmetric banded matrix grows like $\sqrt N$  is observable as a critical point in the fourth moment of the level density for a wide class of symmetric banded matrices.  A second critical point when the bandwidth grows like ${2 \over 5} N$ leads to a new conjectured phase transition in the eigenvalue localization, whose existence we demonstrate in numerical experiments.  
	\end{abstract}
	\section{Introduction}
		An important open problem in random matrix theory is the {\it Poisson/Gaudin--Mehta  conjecture} on the existence of  a phase transition in the local eigenvalue statistics of random real symmetric banded (RSB) matrices:
	\begin{conjecture}
		The limiting local statistics of a random RSB matrix with independent entries and bandwidth $b \asymp N^\alpha$ are Poisson if $\alpha < \frac{1}{2}$ and are Gaudin--Mehta if $\alpha > \frac{1}{2}$.
	\end{conjecture}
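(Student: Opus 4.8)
The plan is to treat the two regimes by entirely separate mechanisms, since the conjectured limits arise from opposite physical pictures: localization of bulk eigenvectors on the Poisson side and complete delocalization on the Gaudin--Mehta side. The organizing principle is that the only relevant length scale is the localization length $\ell \asymp b^2$, so that the threshold $\alpha = \tfrac12$ is precisely where $\ell$ crosses the matrix dimension $N$; the whole proof amounts to showing that each mechanism is in force on its own side of this crossing.

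On the delocalized side $\alpha > \tfrac12$, I would adapt the three-step universality program of Erd\H{o}s--Schlein--Yau to the banded setting. \emph{Step one} is a local semicircle law controlling the resolvent $G(z) = (H-z)^{-1}$ for $\mathrm{Im}\, z$ down to scales $N^{-1+\varepsilon}$, via the self-consistent vector equation for band matrices together with fluctuation-averaging and the self-energy renormalization (``quantum diffusion'') of Erd\H{o}s--Knowles, which is needed because the naive semicircle equation only closes once $b$ is large; complete delocalization of bulk eigenvectors, $\|u_k\|_\infty \ll N^{-1/2+\varepsilon}$, follows as a corollary together with eigenvalue rigidity. \emph{Step two} adds a small GOE component by running Dyson Brownian motion for time $t \sim N^{-1+\varepsilon}$ and invokes local relaxation and homogenization to show the gap statistics reach the sine (Gaudin--Mehta) kernel after this short time, using rigidity as input. \emph{Step three} is a Green's-function comparison (Lindeberg swapping) argument removing the added Gaussian piece, so that the banded ensemble and its Gaussian-divisible perturbation share the same local statistics.

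On the localized side $\alpha < \tfrac12$, I would first establish Anderson-type localization, either by the fractional-moment method --- bounding $\E\,|G_{xy}(z)|^s$ for $s<1$ and deriving decay $\exp(-c\,|x-y|/\ell)$ worked blockwise in blocks of width $b$ so the effective disorder is strong enough --- or by the supersymmetric transfer-operator representation in the spirit of Fyodorov--Mirlin and Schenker, which yields $\ell \asymp b^2$. Localization makes the spectral contributions of well-separated blocks of length $\ell$ asymptotically independent; to upgrade this decorrelation to convergence of the rescaled eigenvalue point process to a Poisson process I would prove a Minami estimate controlling the second factorial moment (ruling out two nearby eigenvalues from overlapping regions) alongside a Wegner estimate for the local density, which together give the full Poisson limit.

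The hard part is closing the two regimes up to the \emph{common} threshold $\alpha = \tfrac12$, and this is where I expect the entire difficulty to concentrate. Localization estimates degenerate as $\ell = b^2 \to N$, since the small parameter driving the fractional-moment or supersymmetric expansion vanishes, while the local law, rigidity, and Dyson-Brownian-motion machinery require $b$ comfortably above $N^{1/2}$ for the self-consistent equation to hold on the optimal scale; at present neither side reaches $\tfrac12$, leaving a gap around the critical exponent. The genuinely new ingredient needed --- beyond merely optimizing existing band-matrix results --- is a sharp two-sided analysis at criticality, for which the supersymmetric sigma-model heuristic predicts the transition but has so far resisted rigorous implementation. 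It is precisely because a direct attack stalls at $\alpha=\tfrac12$ that indirect, global-scale evidence of the form pursued in this paper is valuable.
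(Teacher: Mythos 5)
You have not proved anything, and to be fair you essentially say so yourself in your final paragraph --- but it is important to be clear that the statement you were given is Conjecture~1 of the paper, which the paper itself does \emph{not} prove and explicitly treats as open (it is cited as the final entry on Deift's list of open problems). There is no ``paper's own proof'' to compare against: the paper's actual theorem is a much weaker, adjacent statement, namely that the transition leaves a global-scale signature as a critical point of the fourth moment of the level density. So your submission should be judged as a research program, and as such it has a genuine, fatal gap exactly where you locate it: neither mechanism reaches the critical exponent. Concretely, on the delocalized side the local-law/Dyson-Brownian-motion/Green's-function-comparison machinery you invoke is currently established for band matrices only at much larger bandwidths (the Bourgade--Erd\H{o}s--Yau--Yin result the paper cites requires $b = cN$, and quantum-diffusion-based local laws do not close the self-consistent equation down to $b \sim N^{1/2+\varepsilon}$); on the localized side the best rigorous localization bound is Schenker's $\ell \le b^8$, which is far from the $\ell \asymp b^2$ your argument needs, so the localization input for Poisson statistics is unavailable for any $\alpha > \frac{1}{8}$, let alone all $\alpha < \frac{1}{2}$; and no Minami-type estimate is known for band matrices, so even granting localization, the step upgrading decorrelation to a Poisson limit is conditional on an unproven estimate.

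Everything you write conditionally on these inputs is a reasonable summary of how experts expect a proof to eventually go, and your diagnosis that the difficulty concentrates at $\alpha = \frac{1}{2}$ is correct. But a proof sketch whose every load-bearing lemma is either unproven in the banded setting or proven only in a parameter regime bounded away from the claimed threshold is not a proof attempt with a fixable gap; it is a restatement of why the conjecture is open. The appropriate conclusion --- which is in fact the one the paper draws --- is that since a direct attack stalls, one should look for indirect observables of the transition, such as the fourth-moment critical point at $b = \left(\frac{3N}{2}\right)^{\frac{1}{2}} + o(N^{\frac{1}{2}})$ that the paper actually establishes.
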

	This paper concerns the question of whether this phase transition is also observable in the global eigenvalue statistics, e.g., the level density of the eigenvalues. 
	\begin{definition}
		An $N\times N$ random matrix $H$ is a \emph{real symmetric banded Wigner matrix} (or RSB Wigner matrix) with \emph{bandwidth} $b$ if the following four conditions hold:
		\begin{enumerate}
			\item $H$ is a real symmetric matrix.
			\item The upper triangular elements $h_{ij}$, $i \le j$, are jointly independent real random variables.
			\item For $|i -j|<b$, $\E h_{ij} = 0$ and $\E h_{ij}^2 = 1$.
			\item For $|i-j| \ge b$, $h_{ij} = 0$.
		\end{enumerate}
	\end{definition}
	The limiting level densities of RSB Wigner matrices were first considered in 1991 by Bogachev, Khorunzhii, Molchanov, and Pastur \cite{Bogachev1991, Molchanov1992}. In these papers, they obtained the following result:
	\begin{proposition}
		The normalized level density of an RSB Wigner matrix with bandwidth $b = cN^\alpha$ is given by the Wigner semi-circle in the limit $N \rightarrow \infty$ for any $0 < \alpha < 1$ and $0 < c \le 1$. If $\alpha = 1$ and $0 < c < 1$ or $b = \mathrm{const.}$, then some other distribution is obtained. 
	\end{proposition}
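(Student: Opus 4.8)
The plan is to establish convergence to the semicircle law by the method of moments, exploiting the fact that for $0<\alpha<1$ the bandwidth $b=cN^\alpha$ grows without bound (because $\alpha>0$) while remaining a vanishing fraction of the matrix size (because $b/N=cN^{\alpha-1}\to 0$). First I would fix the normalization: away from the boundary each row of $H$ has exactly $2b-1$ nonzero entries, each of variance one, so the natural scaling is $W=H/\sqrt{2b-1}$. A one-line second-moment check, $\frac1N\E\tr W^2=\frac{1}{N(2b-1)}\sum_{i,j}\E h_{ij}^2\to 1$, confirms this is the scaling that should yield the standard semicircle $\mu_{sc}$ on $[-2,2]$, whose moments are $\int x^{2p}\,d\mu_{sc}=C_p$ (the $p$-th Catalan number) and $\int x^{2p+1}\,d\mu_{sc}=0$. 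It then suffices to prove $\frac1N\E\tr W^{k}\to\int x^{k}\,d\mu_{sc}$ for every $k$, since $\mu_{sc}$ is compactly supported and hence determined by its moments.

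The core of the argument is the combinatorial expansion
\begin{equation}
\frac1N\E\tr W^{k}=\frac{1}{N(2b-1)^{k/2}}\sum_{i_1,\dots,i_k}\E\!\left[h_{i_1 i_2}h_{i_2 i_3}\cdots h_{i_k i_1}\right],
\end{equation}
where each summand is indexed by a closed walk $i_1\to i_2\to\cdots\to i_k\to i_1$ on $\{1,\dots,N\}$ subject to the band constraint $|i_\ell-i_{\ell+1}|<b$. By independence and the mean-zero assumption of the Definition, a walk contributes only if every edge it traverses is traversed at least twice. I would then classify walks by their edge-multigraph: if a connected walk visits $v$ distinct vertices and uses $e$ distinct edges with multiplicities $m_1,\dots,m_e\ge 2$, then $\sum_j m_j=k$ forces $e\le k/2$, while connectivity gives $v-1\le e$. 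Embedding such a graph into $\{1,\dots,N\}$ under the band constraint costs at most $N$ choices for a root vertex and at most $2b-1$ choices per spanning-tree edge, so the number of realizations is $O\!\big(N(2b-1)^{v-1}\big)$, weighted by $\prod_j\E h^{m_j}$. After dividing by $N(2b-1)^{k/2}$, the exponent satisfies $v-1-\tfrac k2\le 0$, with equality exactly when $v-1=e=k/2$: that is, precisely for \emph{double-tree} walks in which every edge is traversed exactly twice and the edges form a tree. All other walk types carry a surplus factor $(2b-1)^{v-1-k/2}\le(2b-1)^{-1}\to 0$ (here $\alpha>0$ is essential), and for odd $k$ no double-tree walk exists, so those moments vanish. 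Counting the double-tree walks reduces to counting non-crossing pair partitions of the $k=2p$ steps, of which there are exactly $C_p$; each contributes $\prod_j\E h^2=1$ and $N(2b-1)^{p}(1+o(1))$ realizations, giving $\frac1N\E\tr W^{2p}\to C_p$ as required.

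Two points deserve care. The factor $N(2b-1)^{p}$ for double-tree walks is exact only for root vertices lying at distance more than $pb$ from the ends of $\{1,\dots,N\}$; the remaining $O(b)=O(N^\alpha)$ boundary rows contribute a relative error $O(b/N)=O(N^{\alpha-1})$, which vanishes precisely because $\alpha<1$. This is the analytic mechanism behind the dichotomy in the statement: when $\alpha=1$ the boundary is a fixed fraction of the matrix and the limiting density is deformed away from the semicircle. The constant $c$ enters only through the requirement $b\le N$, i.e.\ $c\le 1$ in the borderline case $\alpha=1$, and is immaterial for $\alpha<1$.

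The main obstacle is that the Definition assumes only $\E h_{ij}^2=1$, so the higher moments $\E h^{m_j}$ arising for the subleading walk classes need not be finite. I would resolve this by a truncation argument: replace $h_{ij}$ by its truncation at level $\eta\sqrt{2b-1}$, show via a rank/resolvent (or Lindeberg-type) estimate that this perturbs the empirical spectral measure negligibly, and then run the moment computation above on the bounded entries, where all $\E h^{m_j}$ are controlled and the genus bound $(2b-1)^{v-1-k/2}$ does the rest. The other genuinely technical step is the bookkeeping of the band constraint for walks containing cycles, where non-tree edges impose additional conditions $|i_\ell-i_{\ell+1}|<b$; these only tighten the count, so the upper bound $O\!\big(N(2b-1)^{v-1}\big)$ is unaffected, but the argument must be written with care. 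Finally, moment convergence yields weak convergence of the expected spectral measure to $\mu_{sc}$; an almost-sure statement, if desired, follows from a standard variance bound $\mathrm{Var}\big(\tfrac1N\tr W^{k}\big)=O(N^{-2})$ via a paired-walk estimate together with Borel--Cantelli.
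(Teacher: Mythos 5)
This proposition is not proved in the paper at all: it is quoted as a known result of Bogachev, Khorunzhii, Molchanov, and Pastur, with a citation standing in for the proof. So there is nothing internal to compare against; your moment-method argument is essentially the classical route (and close in spirit to what those authors actually did), and as a sketch it is sound. You correctly identify the two mechanisms behind the dichotomy: the surplus factor $(2b-1)^{v-1-k/2}$ kills non-double-tree walks only because $b\to\infty$ (so $\alpha>0$, or more generally $b\to\infty$, is needed and $b=\mathrm{const.}$ fails), and the boundary rows are an $O(b/N)$ fraction, which vanishes iff $\alpha<1$ (so $b=cN$ deforms the limit). The normalisation check against the paper's own count $\E\tr(H^2)=2Nb-N-b^2+b$ confirms your $\sqrt{2b-1}$ scaling and shows explicitly that the variance normalisation itself already drifts away from the bulk value by a factor $1-\frac{b(b-1)}{N(2b-1)}$ when $b\asymp N$.

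Two caveats, neither fatal but both worth writing down if you were to complete this. First, your remark that after truncation ``the genus bound does the rest'' is slightly too quick: a double tree in which one edge has multiplicity $4$ and the rest multiplicity $2$ has $v-1=e=\frac{k}{2}-1$, and its contribution after truncation at level $\eta\sqrt{2b-1}$ is of order $\eta^{2}$, not $o(1)$ in $N$; you must take $N\to\infty$ first and then $\eta\to 0$, in the usual order of limits. Second, the Definition in the paper assumes independent but not identically distributed entries with only two matching moments, and under those hypotheses alone the truncation step genuinely requires a Lindeberg-type uniform integrability condition on the $h_{ij}^2$ (without it the semicircle law can fail); the original references impose such a condition, and the proposition as stated here is silently inheriting it. Flagging that hypothesis would make your argument airtight rather than merely standard.
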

	This is surprising for two reasons: firstly, the Wigner semi-circle law extends down to small bandwidths $b = \mathcal{O}(N^\varepsilon)$, but even more surprisingly, it does not hold for very large bandwidths $b = cN$. It would therefore seem that the Poisson/Gaudin--Mehta transition is unobservable at a macroscopic scale, and indeed, the authors of \cite{Molchanov1992} make an interesting remark to this effect:
	\begin{quote}
		[The parameter $\frac{b^2}{N}$] does not play a special role in the formation of the density of states of the considered matrices. However, we are not inclined to regard our results as incompatible with the interesting [Poisson/Gaudin--Mehta conjecture], since, as is well known, the integrated density of states has very little sensitivity to the localization properties of the states of random operators.
	\end{quote}
	We contend that this is not the case: the key is that ``very little sensitivity'' is not the same as ``none''. Our main result is a proof that the Poisson/Gaudin--Mehta transition is indeed observable on the global scale, as a critical point in the fourth moment:
%
%
%
	\begin{theorem}
		In the limit $N\rightarrow \infty$, the fourth moment of the normalised level density 
		\begin{equation}\label{key}
		m_4(\sigma_{N,b}) = \int_{-\infty}^{\infty} x^4 \sigma_{N,b}(dx)
		\end{equation}
		of an RSB Wigner ensemble satisfying a four moment condition, has two critical points
		\begin{equation*}\label{key}
		\partial_b m_4(\sigma_{N,b}) = 0 
		\end{equation*}
		found at 
		\begin{equation*}\label{key}
		b = \left(\frac{3N}{2}\right)^\frac{1}{2} + o(N^{\frac{1}{2}})
		\end{equation*} 
		and
		\begin{equation*}\label{key}
		b = \frac{2}{5}N + o(N).
		\end{equation*}
	\end{theorem}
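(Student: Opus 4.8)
The plan is to turn the fourth moment into an exact finite-$N$ trace computation and then locate the zeros of its $b$-derivative by matched asymptotic balances. Since the normalized fourth moment is scale invariant (normalize so that the second moment is $1$), I would work with the moments of the averaged empirical spectral measure, i.e.\ the ratio
\[
m_4(\sigma_{N,b}) = \frac{N\,\E\operatorname{tr} H^4}{\left(\E\operatorname{tr} H^2\right)^2},
\]
and set $T_2 := \E\operatorname{tr} H^2$, $T_4 := \E\operatorname{tr} H^4$. The second moment is immediate: $T_2 = \sum_{|i-j|<b}\E h_{ij}^2$ counts lattice pairs with $|i-j|<b$, giving the exact expression $T_2 = (2b-1)N - b(b-1)$ once the two boundary triangles are subtracted.

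For $T_4$ I would expand $\E\operatorname{tr}H^4 = \sum \E[h_{i_1i_2}h_{i_2i_3}h_{i_3i_4}h_{i_4i_1}]$ and keep only closed length-$4$ walks whose edge multiset has all multiplicities even (the rest vanish by independence and mean zero). These are exactly the \emph{cherries} $c\to a\to c\to b\to c$ with $a\neq b$ (two distinct edges, each used twice) and the degenerate walks $p\to q\to p\to q\to p$ (one edge used four times); a careful partition of the tuples shows these classes are disjoint and avoid double counting. Writing $d_c := \#\{j: |c-j|<b\}$ for the row degree, this yields $T_4 = 2\sum_c d_c(d_c-1) + \sum_{|i-j|<b}\E h_{ij}^4$. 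Under the four moment condition, which fixes $\E h_{ij}^4 = 3$ off the diagonal, the last sum equals $3T_2$ up to an $O(N)$ diagonal correction, so $T_4 = 2P + T_2 + O(N)$ with $P := \sum_c d_c^2$. The decisive quantity is therefore $P$, which I would compute exactly: the bulk degrees contribute $(N-2b+2)(2b-1)^2$ and the two boundary ramps contribute $2\sum_{m=b}^{2b-2} m^2$, giving after simplification the clean polynomial $P = 4b^2N - 4bN + N - \tfrac{5}{3}b(2b-1)(b-1)$. Here the boundary terms are not optional: the leading part of $m_4$ is the semicircle value $2$ and is $b$-independent, so the critical points live entirely in the subleading, boundary- and kurtosis-generated corrections.

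With $T_2,T_4$ in hand, $\partial_b m_4 = 0$ is equivalent to $F(b) := (\partial_b T_4)\,T_2 - 2T_4\,(\partial_b T_2) = 0$. I would verify that the top-order $N^2b^2$ contributions cancel, and then organize $F$ by powers of $N$ as $F = N^2 G_2 + N G_1 + G_0$, where $\deg_b G_2 = 1$ (with $G_2 \sim -4b$), $\deg_b G_1 = 3$ (leading term $\tfrac{8}{3}b^3$) and $\deg_b G_0 = 4$ (leading term $-\tfrac{20}{3}b^4$). The two critical points then emerge from two distinct dominant balances. At the scale $b = \Theta(N^{1/2})$ the terms $N^2G_2 \sim -4N^2 b$ and $NG_1 \sim \tfrac{8}{3}Nb^3$ balance, forcing $b^2 = \tfrac32 N$ and hence $b = (3N/2)^{1/2}$; at the scale $b = \Theta(N)$ the terms $NG_1 \sim \tfrac83 Nb^3$ and $G_0 \sim -\tfrac{20}{3}b^4$ balance, forcing $b = \tfrac{2}{5}N$. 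In each balance the omitted terms are genuinely of lower order in $N$ along the relevant scale, which produces the $o(N^{1/2})$ and $o(N)$ error terms.

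The main obstacle is precisely this subleading bookkeeping. Because the dominant $N^2b^2$ terms cancel, no coarse leading-order approximation suffices: the critical-point locations are governed by the coefficients $\tfrac83$ and $-\tfrac{20}{3}$, which only appear once the boundary corrections to $d_c$ and the off-diagonal fourth moment are tracked exactly, so both the combinatorial enumeration of walks and the boundary sums must be carried out without loss. A secondary obstacle is confirming that $F$ has exactly these two roots in the admissible range $0 < b \le N/2$ (where the two boundary ramps are disjoint, so the formula for $P$ holds and both critical points indeed lie), that each is a genuine sign change of $\partial_b m_4$ rather than an artifact of a single balance, and that the diagonal fourth moment, which enters only at order $N^2$ through the constant term of $G_2$, cannot shift either location at leading order.
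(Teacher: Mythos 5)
Your proposal is correct and recovers exactly the paper's intermediate formulas --- your $T_2=(2b-1)N-b(b-1)$ and $T_4=2P+T_2$ with $P=(2b-1)^2N-\tfrac{5}{3}b(b-1)(2b-1)$ expand precisely to the paper's $\E\tr(H^2)=2Nb-N-b^2+b$ and $\E\tr(H^4)=\tfrac{1}{3}(24Nb^2-18Nb+3N-20b^3+27b^2-7b)$ --- but your route to the fourth trace moment is genuinely different. The paper assumes $N=mb$, writes $H$ in block form with diagonal blocks $A_i\sim\mathrm{BWE}_{b,b}$ and lower-triangular couplings $L_i$, and reduces $\E\tr H^4$ to five separate block-trace expectations (Lemma 1, proved by entrywise computation in the appendix). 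You instead enumerate closed length-$4$ walks directly: the only contributing index tuples are the ``cherries'' ($i_1=i_3$ or $i_2=i_4$, counted as $2\sum_c d_c(d_c-1)$ via the row degrees $d_c$) and the quadruple edges (contributing $3T_2$ by the moment matching), after which everything reduces to the single boundary sum $P=\sum_c d_c^2$. This is more elementary, avoids the divisibility assumption $N=mb$, and makes transparent that the answer depends only on the degree profile of the band --- which is also exactly where the restriction $b\le N/2$ (non-overlapping boundary ramps) enters. The subsequent critical-point analysis is essentially the paper's: your $F=N^2G_2+NG_1+G_0$ with $G_2\sim-4b$, $G_1\sim\tfrac{8}{3}b^3$, $G_0\sim-\tfrac{20}{3}b^4$ matches the paper's numerator $-12N^3b+8N^2b^3-20Nb^4+\cdots$ up to the overall factor $3N$, and the two dominant balances give the same locations. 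Two minor points: under the paper's four-moment matching the diagonal entries also satisfy $\E h_{ii}^4=3$, so your $O(N)$ hedge is unnecessary and $T_4=2P+T_2$ holds exactly; and the exhaustiveness check you flag as a remaining obstacle (ruling out further roots, e.g.\ for $b\lesssim N^{1/3}$) is handled in the paper by a short case analysis showing $N\partial_b m_4=\mathcal{O}(Nb^{-2})$ in that regime, which you would need to carry out to complete the argument.
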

	\noindent We prove this theorem in Section~\ref{Section4.3}.  We further demonstrate in Section~\ref{newphase} numerical evidence that the second critical point at $b = \frac{2}{5}N + o(N)$  corresponds precisely to the location of a second (lower order)  localisation/delocalisation transition in the eigenvectors.   The existence of this transition has not been found in in the existing literature, either conjectured or proven.

	\section{Previous Results}
	\subsection{Local statistics}
	The Poisson/Gaudin--Mehta conjecture appears as the final entry in Percy Deift's list of open problems in integrable systems and random matrices \cite{Deift2007}.   Interest in this conjecture can be traced back to the numerical work of Seligman, Verbaarschot, and Zirnbauer \cite{Seligman1984}, and Casati, Izrailev, and Molinari \cite{Casati1991} in the late 1980s/early 1990s. Since this time, progress has been rather modest: results concerning local universality are only just beginning to emerge, and only apply to banded random matrices with large bandwidth. Shcherbina proved local universality for a specially structured `periodic block banded GUE' with large bandwidth $b = cN$ using a rigorous supersymmetric method \cite{Shcherbina2014a};  Bourgade, Erd\H{o}s, Yau, and Yin \cite{Bourgade2016} strengthened this result using heat flow methods combined with a new mean field reduction technique to prove local universality for more general periodic banded Wigner ensembles in both the real symmetric/Hermitian cases, again for $b = cN$. 
	
	Outside of numerical simulation and heuristic supersymmetric arguments (see Fyodorov and Mirlin \cite{Fyodorov1991}), the only results indicating the critical bandwidth $b =\sqrt{N}$ have concerned the two point correlation function of the characteristic polynomial
	\begin{equation}\label{key}
	F_2(\lambda_1, \lambda_2) = \E \left(\det(A-\lambda_1)\det(A-\lambda_2)\right),
	\end{equation}
	which, in an appropriate scaling limit goes to
	\begin{equation}\label{key}
	F_2(\lambda_1, \lambda_2) \rightarrow 1
	\end{equation}
	if $\alpha < \frac{1}{2}$ \cite{Shcherbina2014}, and goes to
	\begin{equation}\label{key}
	F_2(\lambda_1, \lambda_2) \rightarrow \frac{\sin(\lambda_1 -\lambda_2)}{\lambda_1 - \lambda_2}
	\end{equation}
	if $\alpha > \frac{1}{2}$ \cite{Shcherbina2016}. These results, although consistent with Poisson/Gaudin--Mehta local statistics, do not imply it. 
	
	On the other hand, much more is known at the edge of the spectrum: Sodin showed that the limiting distribution of the extreme eigenvalues are given by the Tracy--Widom distribution if $b \gg N^{\frac{5}{6}}$ and by some other distribution if $b \ll N^{\frac{5}{6}}$ \cite{Sodin2010}.
	\subsection{Eigenvector statistics of the BGE}
	The situation has been somewhat better on the eigenvector side of the story. Phrased in terms of the eigenvectors, the analogous conjecture is as follows:
	\begin{conjecture}
		The eigenvector localisation length $l$ for a random banded matrix with independent entries and bandwidth $b$ is $l \asymp b^2$. 
	\end{conjecture}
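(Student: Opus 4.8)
The plan is to recast the eigenvalue equation $H\psi = E\psi$ as a product of random transfer matrices and to identify the localisation length $l$ with the inverse of the smallest positive Lyapunov exponent of that product. Solving the equation at site $i$ for $\psi_{i+b-1}$ in terms of the preceding $2(b-1)$ components (using that the extreme band entries are generically nonzero) gives a recurrence $v_{i+1} = T_i(E)\,v_i$ with $v_i \in \mathbb{R}^{2(b-1)}$ and $T_i(E)$ a random symplectic matrix. Grouping rows into disjoint blocks of width $\asymp b$, so that distinct blocks involve disjoint independent entries, yields i.i.d.\ (up to boundary effects) block transfer matrices, to which Oseledets' multiplicative ergodic theorem applies: it produces Lyapunov exponents $\gamma_1 \ge \dots \ge \gamma_{2(b-1)}$ arranged symmetrically about zero by the symplectic structure. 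Localisation is governed by the exponent closest to zero, and one expects $l \asymp 1/\gamma_+$ where $\gamma_+$ denotes the smallest positive exponent.

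The crux is then the quantitative claim $\gamma_+ \asymp b^{-2}$. For the lower bound $\gamma_+ \gtrsim b^{-2}$ (equivalently $l \lesssim b^2$, the localisation side), positivity and simplicity of the Lyapunov spectrum would follow from Goldsheid--Margulis--type Zariski-density of the group generated by the support of $T_i(E)$ in the symplectic group; the quantitative rate $b^{-2}$ then requires a perturbative computation of the stationary measure on the associated flag manifold, or equivalently the Thouless formula relating $\gamma_+$ to the integrated density of states. For the upper bound $\gamma_+ \lesssim b^{-2}$ (the delocalisation side, $l \gtrsim b^2$), I would compare $T_i(E)$ with the deterministic free transfer matrix $T_0(E)$, whose exponents all vanish, and treat the band randomness as a weak perturbation of relative size $\mathcal{O}(b^{-1/2})$ per step; accumulating these fluctuations across the $\asymp b$ channels should produce a drift of order $b^{-2}$, matching the quasi-one-dimensional heuristic $l \sim N_{\mathrm{ch}}\cdot \ell_{\mathrm{mfp}}$ with channel number $N_{\mathrm{ch}} \asymp b$ and mean free path $\ell_{\mathrm{mfp}} \asymp b$.

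As an independent route to the delocalisation bound in the regime $b \gg \sqrt N$, I would instead run resolvent methods: establish a local semicircle law for $G(z) = (H-z)^{-1}$ down to spectral scales $\eta = \operatorname{Im} z \gg 1/N$ by iterating the self-consistent equation for the diagonal Green's functions $G_{ii}$, and deduce delocalisation from high-probability bounds on $\operatorname{Im} G_{ii}(E+i\eta)$. The small parameter controlling the self-consistent loop is the effective number of neighbours $\asymp b$, so the fluctuation of $G_{ii}$ is of size $\asymp (b\eta)^{-1/2}$; pushing $\eta$ down to the eigenvalue-spacing scale $\eta \asymp 1/N$ forces exactly the threshold $b \gg \sqrt N$, in agreement with $l \asymp b^2$.

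The main obstacle is the near-critical window $b \asymp \sqrt N$, where neither method is sharp. The perturbative transfer-matrix expansion loses control once the accumulated drift over length $N$ becomes comparable to $1$, while the resolvent bootstrap degrades precisely when $b\eta \asymp 1$ at $\eta \asymp 1/N$; consequently the localisation bound is presently available only for $b \ll N^{c}$ with small $c$ and the delocalisation bound only for $b \gg N^{1-c'}$, leaving a wide gap around $\sqrt N$. Closing this gap so that the two estimates meet, and thereby pinning the constant in $l \asymp b^2$ uniformly, is the full content of the conjecture and the reason it remains open; I expect a complete proof to require genuinely non-perturbative input, most plausibly a rigorous implementation of the supersymmetric nonlinear $\sigma$-model that captures the diffusion constant $D \asymp b^2$ directly rather than through either one-sided expansion.
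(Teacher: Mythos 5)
This statement is the paper's Conjecture~2, and the paper does not prove it: it is presented explicitly as an open problem (the Anderson transition conjecture for banded matrices), with the text only surveying the known partial results --- Schenker's upper bound $l \le b^8$, the weak lower bounds $l \ge b^{7/6}$ and $l \ge b^{5/4}$ of Erd\H{o}s--Knowles(--Yau--Yin), and Bao--Erd\H{o}s for block banded ensembles. So there is no proof in the paper against which to match your argument, and your proposal --- to its credit, candidly --- is a research program rather than a proof: your final paragraph concedes that the localisation and delocalisation estimates do not meet and that closing the gap ``is the full content of the conjecture.'' An honest assessment of difficulty is not a proof, so the statement remains exactly as unproven after your proposal as before it.

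Beyond that structural point, several concrete steps in the program would fail as stated. First, the transfer-matrix reduction requires inverting the extreme band entries (or, in block form, the off-diagonal blocks): for a Wigner-type band matrix the entries $h_{i,i+b-1}$ are merely mean-zero, variance-one random variables and may vanish with positive probability (e.g.\ Bernoulli entries), and in the paper's own block representation \eqref{blockform} the coupling blocks $L_i$ are strictly lower triangular, hence \emph{singular}, so the symplectic block transfer matrix you propose does not exist without a nontrivial reformulation. Second, Oseledets and Goldsheid--Margulis yield positivity and simplicity of the Lyapunov spectrum only for \emph{fixed} $b$ as $N \to \infty$, with no quantitative $b$-dependence; the assertion $\gamma_+ \asymp b^{-2}$ via ``a perturbative computation of the stationary measure'' is precisely the content of the conjecture, so this step is circular. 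Third, your resolvent route overstates what the self-consistent bootstrap delivers: a fluctuation of size $(b\eta)^{-1/2}$ in $G_{ii}$ controls the local law only down to scales $\eta \gg 1/b$, not $\eta \asymp 1/N$, which is exactly why the cited literature obtains only $l \gtrsim b^{5/4}$ rather than $l \gtrsim b^2$; the threshold $b \gg \sqrt{N}$ cannot be reached by the iteration you describe. Note finally that the paper's actual contribution is orthogonal to all of this: it detects the conjectured transition indirectly, as a critical point of the fourth moment of the level density (Theorem~1), rather than attacking the localisation length itself.
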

	Due to the connection with disordered conductors, this is also known as the Anderson transition conjecture for banded matrices. The first result concerning this conjecture was due to Schenker \cite{Schenker2009}, who provided an upper bound $l \le b^8$. A lower bound of $l\ge b^\frac{7}{6}$ was demonstrated by Erd\H{o}s and Knowles \cite{Erdos2010}; together with Yau and Yin \cite{Erdos2013}, they improved this to  $l\ge b^\frac{5}{4}$. These lower bounds hold in a weaker sense, in that they only hold for `most' eigenvectors. A strong lower bound of $l\ge b^\frac{7}{6}$ was shown by Bao and Erd\H{o}s for block banded Wigner ensembles \cite{Bao2015}.

	\section{Matrix Model}
			The \emph{one-point correlation function} or \emph{level density} of a Hermitian random matrix $H \sim \mathrm{E}_N$ is the unique measure $\rho_N: \mathcal{B}(\mathbb{R}) \rightarrow \mathbb{R}^+$ such that
			\begin{equation}\label{key}
			\int_{-\infty}^{\infty} f(x) \rho_N(dx) = \sum_{i = 1}^N \E f(\lambda_i)
			\end{equation}
			for all continuous compactly supported functions $f: \mathbb{R} \rightarrow \mathbb{R}$, where $\lambda_i$ are the $N$ ordered eigenvalues of $H$.
			For discrete ensembles, $\rho_N$ can only be defined as a measure on $\mathbb{R}$, but for continuous ensembles we can write $\rho_N(dx) = \rho_N(x)dx$ as a density. In this case, we equivalently have
			\begin{equation}\label{key}
			\rho_N(x) = N\int_{-\infty}^{\infty} \dots \int_{-\infty}^{\infty} P(x_1, \dots, x_N) dx_2\dots dx_N,
			\end{equation}
			where $P(x_1, \dots, x_N)$ is the joint density of the eigenvalues. From the one point correlation function, we construct the \emph{normalised one-point correlation function} or \emph{normalised level density} $\sigma_{N}: \mathcal{B}(\mathbb{R}) \rightarrow \mathbb{R}^+$ through the rescaling
			\begin{equation}\label{key}
			\sigma_N(x) = \nu \rho_N(\eta x),
			\end{equation}
			where the scaling parameters
			\begin{equation}\label{key}
			\eta = \sqrt{\frac{m_2(\rho_N)}{m_0(\rho_N)}} 
			\end{equation}
			and
			\begin{equation}\label{key}
			\nu = \sqrt{\frac{m_2(\rho_N)}{m_0(\rho_N)^3}} 
			\end{equation}
			are given in terms of the moments of the one point correlation function
			\begin{equation}\label{key}
			m_k(\rho_N) = \int_{-\infty}^{\infty}x^k \rho_N(dx) = \E\tr(H^k).
			\end{equation}
			This scaling is chosen so that
			\begin{equation}\label{key}
			m_0(\sigma_N) = m_2(\sigma_N) = 1,
			\end{equation}
			i.e., so that $\sigma_N$ is a probability measure with variance 1. From this scaling we therefore have
	\begin{equation}\label{key}
	m_k(\sigma_N) = \frac{m_0(\rho_N)^{\frac{k}{2}-1}m_k(\rho_N)}{m_2(\rho_N)^{\frac{k}{2}}} = \frac{N^{\frac{k}{2}-1}\E \tr (H^k)}{\left(\E \tr(H^2)\right)^\frac{k}{2}},
	\end{equation}
	and in particular, the fourth moment of $\sigma_N$ is
		\begin{equation}\label{key}
		m_4(\sigma_N) = \frac{N\E \tr (H^4)}{\left(\E \tr(H^2)\right)^2}.
		\end{equation}

		In this paper, we consider RSB Wigner matrices that satisfy a four moment condition:
		\begin{definition}
			An $N\times N$ random matrix $H$ is a \emph{four moment Gaussian matching RSB Wigner matrix} with \emph{bandwidth $b$} if $H$ is an RSB Wigner matrix and for $|i -j|<b$, the first four moments of $h_{ij}$ match those of a standard Gaussian random variable, i.e additionally $ \E h_{ij}^3 = 0$, $\E h_{ij}^4 = 3$.
			The ensemble associated to such a random matrix we will denote by $\mathrm{BWE}_{N,b}$, i.e., so that $H \sim \mathrm{BWE}_{N,b}$.
		\end{definition}
	\section{Critical behaviour on the global scale}\label{Section4.3}
	In terms of moments, Proposition 1 now states that in the limit $N \rightarrow \infty$, $b = N^\alpha$, the moments of normalised level density $m_k(\sigma_{N,b})$ of an RSB Wigner ensemble will converge to those of the Wigner semi-circle $m_{k}(\mu_{\mathrm{sc}})$. What about the moments for finite $N$ and $b$? Figure \ref{fig:m4samp} shows a numerical simulation of $m_4(\sigma_{N,b})$.
	\begin{figure}[t]
		\centering
		\includegraphics[scale=1]{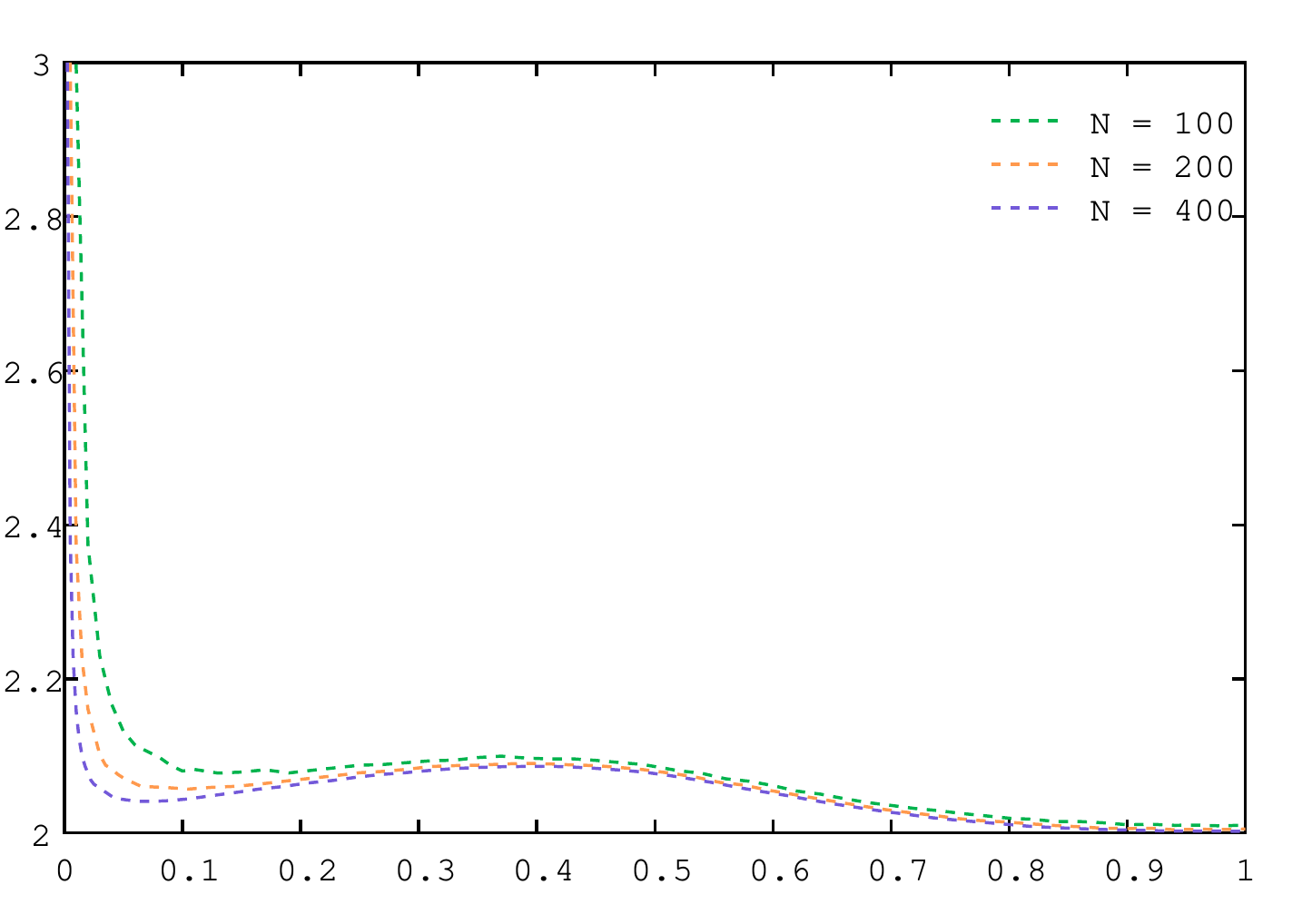}
		\caption{This figure shows $m_4(\sigma_{N,b})$ against $c$ with $b = cN$ for $N = 100,200,400$. Two critical points are evident: the first near $b \simeq \sqrt{N}$, and the second near $b \simeq \frac{2}{5}N$ }
		\label{fig:m4samp}
	\end{figure}
	Two critical points can be seen, which as we prove below, persist in the limit $N \rightarrow \infty$ after rescaling.
	
	\begin{remark}
		In the following, it will be convenient to take $N = mb$ as an integer multiple of the bandwidth $b$, where $m \in \mathbb{N}$ is not necessarily fixed. In this case, the matrix $H \sim \mathrm{BWE}_{N,b}$ can be represented in block diagonal form as
		\begin{equation}\label{blockform}
		H = \begin{bmatrix}
		A_1 & L_1 \\
		L_1^\top & A_2 & L_2\\
		& L_2^\top & A_3 & \ddots\\
		& & \ddots & \ddots& L_{m-1}\\
		& & & L_{m-1}^\top & A_m\\
		\end{bmatrix},
		\end{equation}
		where the $A_i$ are $b\times b$ random matrices drawn from $\mathrm{BWE}_{b,b}$, and the $L_i$ are strictly lower triangular $b \times b$ random matrices with independent entries $l_{ij}$ below the diagonal. We may consider these lower triangular blocks $L_i$ as drawn from an ensemble $\mathrm{LTE}_{b}$, defined in the obvious way.
	\end{remark}
	\begin{remark}
		To simplify notation, we drop subscripts to indicate a generic element.
	\end{remark}
	\begin{lemma}
		Let $A \sim \mathrm{BWE}_{b,b} $ and $L_1,L_2 \sim \mathrm{LTE}_{b}$ be $b \times b$ jointly independent random matrices. Then
		\begin{flalign}
		\E \tr(A^4) &= 2b^3 + b^2 \label{A4}\\
		\E \tr (A^2LL^\top) &= \frac{b^3 - b^2}{2} \label{A2LLT}\\
		\E \tr (A^2L^\top L) &= \frac{b^3 - b^2}{2} \label{A2LTL}\\
		\E \tr(L_1^\top L_1 L_2 L_2^\top)  &= \frac{b^3 - 3b^2 + 2b}{6} \label{L1L2}\\
		\E \tr(L L^\top L L^\top) &=  \frac{4b^3 -3b^2 -b}{6}\label{LLLL}.
		\end{flalign}  
	\end{lemma}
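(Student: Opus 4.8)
The plan is to prove all five identities by the moment method: expand each trace as a sum over index tuples of a product of four matrix entries, and evaluate the expectation term by term using the independence of the entries together with the moment data $\E h = 0$, $\E h^2 = 1$, $\E h^3 = 0$, $\E h^4 = 3$. Since the entries are jointly independent and centered, a term survives only when its four factors can be grouped so that each distinct random variable occurs an even number of times; a matched pair contributes $\E h^2 = 1$ and a fourfold coincidence contributes $\E h^4 = 3$. The computation then reduces to (i) enumerating the admissible coincidence patterns of the indices, and (ii) counting, for each pattern, the number of index tuples compatible with the structural support of the matrices involved.

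For $\E\tr(A^4)$, I would write $\tr(A^4)=\sum_{i,j,k,l} a_{ij}a_{jk}a_{kl}a_{li}$ and, using the symmetry $a_{pq}=a_{qp}$, organize the surviving terms by how the four edges $\{i,j\},\{j,k\},\{k,l\},\{l,i\}$ of the associated closed walk pair up. The two non-crossing pairings, $\{i,j\}=\{j,k\}$ with $\{k,l\}=\{l,i\}$ (forcing $i=k$) and $\{i,j\}=\{l,i\}$ with $\{j,k\}=\{k,l\}$ (forcing $j=l$), each contribute $b^3-b^2$ once the degenerate sub-case of all four edges coinciding is removed, while the crossing pairing $\{i,j\}=\{k,l\}$, $\{j,k\}=\{l,i\}$ forces a full collapse of the indices and yields no genuine two-edge term. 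The fully coincident configurations, where all four factors equal one entry $a_{pq}^4$, carry weight $\E h^4=3$ and contribute $3b^2$ in total (summing the off-diagonal and diagonal entries), so that the total is $2b^3+b^2$.

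For the mixed and purely triangular traces I would exploit that $A$ and the $L_i$ are independent and that each $L$-entry is its own independent variable, with no symmetry and $l_{pq}\ne 0$ only for $p>q$. In $\E\tr(A^2 LL^\top)=\sum_{i,j,k,l}\E[a_{ij}a_{jk}]\,\E[l_{kl}l_{il}]$ the $A$-factor forces $i=k$ (giving $\E a_{ij}^2=1$) and the $L$-factor is nonzero only when the ordered pairs coincide and the support condition $i>l$ holds; counting the free index $j$ against the constrained pair gives $\tfrac12(b^3-b^2)$, and $\E\tr(A^2 L^\top L)$ follows identically from the mirror support condition $l>i$. The same bookkeeping applied to $\E\tr(L_1^\top L_1 L_2 L_2^\top)$ forces one pairing inside each independent block and reduces the count to strictly ordered triples $j>i>l$, i.e.\ $\binom{b}{3}=\tfrac16(b^3-3b^2+2b)$. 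The most involved case is $\E\tr(LL^\top LL^\top)$, where all four factors come from the same $L$: here I would separate the all-equal contribution (weight $3$, support $i>j$, giving $\tfrac32 b(b-1)$) from the two genuine two-pair patterns (each reducing to a sum $\sum_m m(m-1)=2\binom{b}{3}$), the crossing pattern again being degenerate, so that combining yields $\tfrac16(4b^3-3b^2-b)$.

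The routine part is the index algebra; the genuine obstacle, and where care is needed, is the boundary bookkeeping. One must correctly isolate the degenerate coincidences that either upgrade a pair to a fourth-moment factor of weight $3$ or collapse a crossing pairing, and must accurately tally the strict inequalities imposed by the triangular support. It is precisely these effects that determine the subleading $b^2$ and $b$ corrections, which are the finite-size terms that drive the critical points in Theorem 1, so they must be extracted with the right coefficients rather than being absorbed into the leading $b^3$ behavior.
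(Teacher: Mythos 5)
Your proposal is correct and follows essentially the same route as the paper's appendix: a direct expansion of each trace over index tuples, using joint independence, the matching moments $\E h^2=1$, $\E h^4=3$, and the triangular support of $L$ to enumerate the surviving coincidence patterns (the paper merely organizes the bookkeeping through the entries of the intermediate matrices $A^2$, $LL^\top$, $L^\top L$ rather than through walk pairings). All of your stated counts, including the degenerate fourth-moment configurations and the strict-inequality tallies such as $\binom{b}{3}$ and $\tfrac{3}{2}b(b-1)+2\cdot 2\binom{b}{3}$, reproduce the paper's values exactly.
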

	\begin{proof}
		The proof requires a straight forward, but tedious, moment calculation. Details may be found in the appendix. 
	\end{proof}
	
	\begin{lemma}
		The fourth moment of the normalised one point correlation function $\sigma_{N,b}$ of $H \sim \mathrm{BWE}_{N,b}$ is
		\begin{equation}\label{key}
		m_4(\sigma_{N,b}) =  \frac{6N^2(4b^2+b+1) - 5Nb(4b+1)(b-1)}{3b^2(2N-b+1)^2}
		\end{equation}
		for $b \le \frac{N}{2}$.
	\end{lemma}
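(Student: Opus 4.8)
The plan is to reduce the statement to the two expected traces $\E\tr(H^2)$ and $\E\tr(H^4)$, since the normalisation computed above gives $m_4(\sigma_{N,b}) = N\,\E\tr(H^4)\big/\bigl(\E\tr(H^2)\bigr)^2$. Taking $N=mb$ and using the block-tridiagonal representation \eqref{blockform}, both traces decompose into sums of expected traces of products of the blocks $A_i$ and $L_i$, and each such product is one of the quantities already evaluated in the previous lemma, equations \eqref{A4}--\eqref{LLLL}.

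The second moment is immediate: collecting the diagonal blocks of $H^2$ gives $\E\tr(H^2) = m\,\E\tr(A^2) + 2(m-1)\,\E\tr(LL^\top)$, and with $\E\tr(A^2)=b^2$ and $\E\tr(LL^\top)=\binom{b}{2}$ this is an explicit polynomial in $b$ and $m$. For the fourth moment I would write $\tr(H^4)=\sum_{p,q,r,s}\tr\bigl(H_{pq}H_{qr}H_{rs}H_{sp}\bigr)$ as a sum over closed walks $p\to q\to r\to s\to p$ of length four on the path graph on vertices $1,\dots,m$ with self-loops, the block $H_{pq}$ being nonzero only when $|p-q|\le 1$ (with $H_{pp}=A_p$, $H_{p,p+1}=L_p$, $H_{p+1,p}=L_p^\top$). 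By joint independence and $\E A=\E L=0$, a walk contributes only when every block factor is paired, and after using cyclicity of the trace these surviving walks fall into exactly the five families of the previous lemma: $\tr(A^4)$ (all four steps self-loops at one vertex), $\tr(A^2LL^\top)$ and $\tr(A^2L^\top L)$ (two self-loops at a vertex together with one out-and-back traversal of an incident edge, the two orderings distinguished by whether the stays sit at the lower or upper endpoint), $\tr(L_1^\top L_1 L_2 L_2^\top)$ (one out-and-back across each of two adjacent edges), and $\tr(LL^\top LL^\top)$ (two out-and-backs across a single edge).

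The heart of the argument is then combinatorial: I would count the multiplicity of each family as a polynomial in $m$. The $\tr(A^4)$ family occurs once per diagonal block, giving a factor $m$; the single-edge family $\tr(LL^\top LL^\top)$ occurs twice per edge, giving $2(m-1)$; the mixed families $\tr(A^2LL^\top)$ and $\tr(A^2L^\top L)$ each occur four times per edge --- care is needed here because each such product is produced both by walks anchored at the stay-vertex and by walks anchored at the opposite endpoint --- giving $4(m-1)$ of each; and the two-adjacent-edge family $\tr(L_1^\top L_1 L_2 L_2^\top)$ occurs four times per interior vertex, giving $4(m-2)$ and thereby carrying the only genuine boundary correction. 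Multiplying each multiplicity by the corresponding value from \eqref{A4}--\eqref{LLLL} and summing yields $\E\tr(H^4)$ as a polynomial in $b$ and $m$, in which the halves and sixths from the lemma combine with the even multiplicities to leave an overall factor $\tfrac13$.

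Finally I would substitute $m=N/b$, form the ratio $N\,\E\tr(H^4)\big/\bigl(\E\tr(H^2)\bigr)^2$, and simplify to the stated closed form; the hypothesis $b\le \tfrac N2$ is precisely the condition $m\ge 2$ guaranteeing that the block decomposition has at least two diagonal blocks and that the walk-count polynomials (in particular the factor $m-2$) are non-negative. I expect the main obstacle to be the bookkeeping of this walk enumeration rather than any analytic difficulty: the real danger is miscounting the multiplicities --- especially the double anchoring of the mixed $A^2LL^\top$ walks and the reduced contribution at the two end blocks --- since such an error leaves the leading $N^2$ term untouched but corrupts the subleading coefficients, which are exactly what fix the locations of the two critical points in $b$.
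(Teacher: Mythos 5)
Your proposal is correct and follows essentially the same route as the paper: reduce to $N\,\E\tr(H^4)/(\E\tr(H^2))^2$, use the block-tridiagonal form with $N=mb$, and express $\E\tr(H^4)$ as a weighted sum of the five expected traces of Lemma~1 --- your length-4 block-walk enumeration is just a reorganisation of the paper's block-by-block expansion of $\tr((H^2)^2)$, and your multiplicities $m$, $2(m-1)$, $4(m-2)$ agree with the paper's $m$, $2m-2$, $4m-8$. Your split of the mixed family as $4(m-1)$ copies each of $\E\tr(A^2LL^\top)$ and $\E\tr(A^2L^\top L)$ differs from the paper's stated $(6m-8)$ and $2m$, but the totals ($8m-8$) coincide and the two traces are equal by Lemma~1, so the resulting polynomial for $\E\tr(H^4)$ is identical.
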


	\begin{proof}
		The second moment of $H \sim \mathrm{BWE}_{N,b}$ is simply equal to the number of entries inside the band
		\begin{equation}
		\E \tr (H^2) = \E \tr (H H^\top) = \sum_{i,j} \E h_{ij}^2 = 2Nb-N-b^2+b.
		\end{equation}
		The calculation of the fourth moment is more involved. Taking the dimension $N =  mb$ as an integer multiple of the bandwidth, $H$ can be represented in block form as in equation \eqref{blockform}. Assuming that $b \le \frac{N}{2}$, we can then write
		\begin{equation}
		C = H^2 = \scalemath{0.75}{
			\begin{bmatrix}
			A_1^2 + L_1L_1^\top & A_1L_1+L_1A_2 & L_1L_2 \\
			(A_1L_1+L_1A_2)^\top & L_1^\top L_1 + A_2^2 +L_2L_2^\top & A_2L_2 +L_2A_3 & L_2L_3\\
			(L_1L_2)^\top& (A_2L_2 +L_2A_3)^\top& \ddots &\ddots& \ddots\\
			&(L_2L_3)^\top& \ddots & \ddots & L_{m-2}L_{m-1}\\
			&& \ddots & & A_{m-1}L_{m-1}+L_{m-1}A_{m}\\
			& &  (L_{m-2}L_{m-1})^\top &  (A_{m-1}L_{m-1}+L_{m-1}A_{m})^\top & L_{m-1}^\top L_{m-1} + A_{m}^2\\
			\end{bmatrix}},
		\end{equation}
		or, in block coordinates
		\begin{equation} \begin{split}
		&C_{i,i} \quad = L_{i-1}^\top L_{i-1} + A_{i}^2 +L_{i}L_{i}^\top\\
		&C_{i,i+1} = C_{i+1,i}^\top =  A_iL_i +L_iA_{i+1}\\
		&C_{i,i+2} = C_{i+2,i}^\top = L_{i}L_{i+1}
		\end{split} \end{equation}
		with the convention that $L_0 = L_m = 0$. The fourth moment of $H$ is hence
		\begin{equation} \begin{split}
		\E \tr H^4 = & \E \tr C^2 = \sum_{i,j}^{m}\E \tr (C_{ij}C_{ij}^\top)\\
		=& \sum_{i}^{m} \E\tr( (L_{i-1}^\top L_{i-1} + A_{i}^2 +L_{i}L_{i}^\top)(L_{i-1}^\top L_{i-1} + A_{i}^2 +L_{i}L_{i}^\top)^\top)\\
		+& 2 \sum_{i}^{m-1}\E \tr (( A_iL_i +L_iA_{i+1})( A_iL_i +L_iA_{i+1})^\top)\\
		+& 2 \sum_{i}^{m-2} \E\tr((L_{i}L_{i+1})(L_{i}L_{i+1})^\top).
		\end{split} \end{equation}
		In each of the three sums the terms being summed over are identical, with the exception of the first and last term in the first sum. We therefore have
		\begin{equation} \begin{split}
		\E \tr H^4 =& (m-2) \E\tr((L_{1}^\top L_{1} + A^2  +L_{2}L_{2}^\top)(L_{1}^\top L_{1} + A^2 +L_{2}L_{2}^\top)^\top)\\
		&+ 2 \E \tr ((A^2 + L^\top L)(A^2 + L^\top L)^\top)\\
		&+2(m-1)\E \tr(( A_1L +LA_{2})( A_1L +LA_{2})^\top)\\
		&+2(m-2) \E\tr((L_1L_2)(L_1L_2)^\top).
		\end{split} \end{equation}
		Expanding this out and applying Lemma 1, we find
		\begin{equation}\label{key}
		\begin{split}
		\E \tr H^4 =&\; m \E\tr(A^4) + (6m-8)\E\tr(A^2LL^\top)+2m\E\tr(A^2L^\top L) \\ & + (4m-8)\E\tr(L_1^\top L_1 L_2 L_2^\top)+ (2m-2)\E\tr(LL^\top L L^\top)\\
		=& \frac{27b^2 + 3mb - 18 mb^2 + 24mb^3 - 20b^3-7b}{3}\\
		=& \frac{ 24Nb^2- 18 Nb + 3N - 20b^3 + 27b^2 -7b}{3}.
		\end{split}
		\end{equation}
		The fourth moment of the normalised level density is thus
		\begin{equation}\label{key}
		m_4(\sigma_{N,b}) = \frac{N\E\tr(H^4)}{\left(\E \tr(H^2)\right)^2} = \frac{N(24Nb^2- 18 Nb + 3N - 20b^3 + 27b^2 -7b)}{3(2Nb-N-b^2+b)^2}. 
		\end{equation}
		
	\end{proof}
	\noindent
	We now are ready to prove our main result:
	\begin{theorem}
		In the limit $N\rightarrow \infty$, the critical points of the fourth moment of the normalised one point correlation measure $m_4(\sigma_{N,b})$ of $H \sim \mathrm{BWE}_{N,b}$, considered as a function of the bandwidth, are found at 
		\begin{equation*}\label{key}
		b = \left(\frac{3N}{2}\right)^\frac{1}{2} + o\left(N^{\frac{1}{2}}\right)
		\end{equation*} 
		and
		\begin{equation*}\label{key}
		b = \frac{2}{5}N + o\left(N\right).
		\end{equation*}
	\end{theorem}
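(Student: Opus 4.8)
The plan is to work directly from the closed form for $m_4(\sigma_{N,b})$ established in Lemma 2, reducing the condition $\partial_b m_4 = 0$ to the roots of a single polynomial and then extracting their large-$N$ asymptotics by dominant balance. Writing the formula as $m_4(\sigma_{N,b}) = Np(b)/(3q(b)^2)$ with
\[
p(b) = -20b^3 + (24N+27)b^2 - (18N+7)b + 3N, \qquad q(b) = -b^2 + (2N+1)b - N,
\]
the key structural observation is that the denominator is a perfect square up to the constant $3$. Hence the quotient rule collapses neatly: $\partial_b m_4 = N\bigl(p'q - 2pq'\bigr)/(3q^3)$, and since $q(b) = \E\tr H^2 > 0$ throughout the admissible range $1 \le b \le N/2$, the critical points are precisely the zeros of $p'(b)\,q(b) - 2\,p(b)\,q'(b) = 0$.

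First I would expand this product. Both $p'q$ and $pq'$ are quartic in $b$ with coefficients polynomial in $N$, and after collecting terms one obtains
\[
-20b^4 + (8N+34)b^3 + (6N-21)b^2 - (12N^2+10N-7)b + (6N^2+N) = 0 .
\]
The remaining task is purely asymptotic root-finding, and the efficient route is dominant balance at each relevant scale rather than an exact solution of the quartic. Setting $b = \beta N$ and retaining only the $O(N^4)$ contributions leaves $-20\beta^4 + 8\beta^3 = 0$, whose nonzero root $\beta = \tfrac{2}{5}$ produces the macroscopic critical point $b = \tfrac{2}{5}N + o(N)$. Setting $b = \gamma N^{1/2}$ and retaining the $O(N^{5/2})$ contributions (from the $(8N)b^3$ and $(-12N^2)b$ terms) leaves $8\gamma^3 - 12\gamma = 0$, whose positive root $\gamma = \sqrt{3/2}$ gives $b = (3N/2)^{1/2} + o(N^{1/2})$.

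To turn these leading balances into statements with genuine $o(\cdot)$ error terms, I would observe that each leading-order polynomial has a \emph{simple} nonzero root; rescaling the quartic and applying the implicit function theorem (in the small parameter $1/N$, respectively $N^{-1/2}$) then yields a root depending continuously on that parameter and converging to the leading value, so the correction is of strictly lower order. It remains to confirm that exactly these two roots are admissible. A dominant-balance, equivalently Newton-polygon, count shows one root at scale $N$, two at scale $N^{1/2}$ (namely $\pm(3N/2)^{1/2}$), and one at scale $1$ (near $b = \tfrac{1}{2}$); the negative root and the $O(1)$ root lie outside the physical window $(0, N/2]$ — the latter also falling below the minimal meaningful bandwidth and near a pole of $m_4$ — so precisely the two claimed critical points survive.

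The genuinely delicate part is not any individual computation but the interplay of scales: because the quartic carries roots at three distinct orders in $N$, a single naive expansion would miss roots, and one must run the separate dominant balances (reading off the slopes of the Newton polygon of the coefficient degrees $(0,2),(1,2),(2,1),(3,1),(4,0)$) and then certify admissibility. Concretely, the most error-prone step is the algebraic expansion of $p'q - 2pq'$, while the subtlest conceptual point is rigorously bounding the corrections as $o(N)$ and $o(N^{1/2})$ and ruling out any overlooked admissible critical point.
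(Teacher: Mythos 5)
Your proposal is correct and follows essentially the same route as the paper: both differentiate the explicit rational formula for $m_4(\sigma_{N,b})$, reduce to the same quartic numerator $-20b^4+(8N+34)b^3+(6N-21)b^2-(12N^2+10N-7)b+6N^2+N$, and locate its roots by dominant balance at the scales $b\asymp N$ and $b\asymp N^{1/2}$. Your Newton-polygon bookkeeping of all four roots (including the inadmissible ones near $-(3N/2)^{1/2}$ and $b\approx\tfrac12$) and the implicit-function-theorem justification of the $o(\cdot)$ error terms are slightly more systematic than the paper's case split on $\alpha$ with $b\asymp N^\alpha$, but the underlying argument is the same.
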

	\begin{proof}
		Taking the derivative of $m_4(\sigma_{N,b})$ with respect to $b$, we have
		\begin{equation}\label{key}
		\partial_b m_4(\sigma_{N,b}) = \frac{ -20Nb^4 +(8N^2+34N)b^3+(6N^2-21N)b^2-(12N^3+10N^2-7N)b+6N^3+N^2}{3(2Nb-N-b^2+b)^3}.
		\end{equation}
		As the denominator is $\mathcal{O}(N^3b^3)$, this expression will go to zero as $N \rightarrow \infty$; to see the critical behaviour\footnote{As the numerator is a quartic, it is of course possible to compute the roots exactly, but the resulting expressions are not overly useful as they are nearly a page in length.} we must hence rescale this expression, by say, a factor of $N$, giving us
		\begin{equation}\label{key}
		N\partial_b m_4(\sigma_{N,b}) = \frac{ -20N^2b^4 +(8N^3+34N^2)b^3+(6N^3-21N^2)b^2-(12N^4+10N^3-7N^2)b+6N^4+N^3}{3(2Nb-N-b^2+b)^3}.
		\end{equation}
		Let us first assume that $b \asymp N^\alpha$ with $\frac{1}{3} < \alpha < 1$. Then to leading order we have
		\begin{equation}\label{key}
		N\partial_b m_4(\sigma_{N,b}) = \frac{-12N^4b + 8N^3b^3}{3(2Nb-N-b^2+b)^3} + \mathcal{O}\left(Nb^{-3}+N^{-1}b\right).
		\end{equation}
		This expression will only go to zero if the numerator is $o(N^3b^3)$, which requires
		\begin{equation}\label{key}
		b = \left(\frac{3N}{2}\right)^\frac{1}{2} + o\left(N^{\frac{1}{2}}\right).
		\end{equation}
		Now suppose that $b \asymp N$. Then, to leading order we have
		\begin{equation}\label{key}
		N\partial_b m_4(\sigma_{N,b}) = \frac{8N^3b^3-20N^2b^4}{3(2Nb-N-b^2+b)^3} + \mathcal{O}\left(N^{-1}\right).
		\end{equation}
		Again, this will only go to zero if the numerator is $o(N^3b^3)$, which requires
		\begin{equation}\label{key}
		b = \frac{2}{5}N + o(N).
		\end{equation}
		Finally, if $b \asymp N^\alpha$ with $0 \le \alpha \le \frac{1}{3}$  we have
		\begin{equation}\label{key}
		N\partial_b m_4(\sigma_{N,b}) = \frac{-12N^4b}{3(2Nb-N-b^2+b)^3} + \mathcal{O}\left(Nb^{-3}\right) = \mathcal{O}\left(Nb^{-2}\right),
		\end{equation}
		and so there are no roots in this region.
	\end{proof}	

\section{Evidence of a new phase transition}\label{newphase}

Although existence of critical global behaviour is  surprising, the location of
$b \asymp \sqrt{N}$ is not, given that the conjectured transition in the local/eigenvector statistics occurs in the same region. However, we should not be too hasty to characterise this as ``Poisson/Gaudin--Mehta behaviour'': after all, there is the second critical point to consider and this too must be accounted for. If the same mechanisms are responsible for both the local/eigenvector and global statistics, then there should be some local/eigenvector transition around $b \sim {2\over 5} N$.  We now present numerical evidence that this is indeed the case, at least for the eigenvectors.

	As the fourth moment of the level density is sensitive to positions of all eigenvalues, we should accordingly examine an eigenvector statistic that is similarly dependent on all eigenvectors. An appropriate choice is the total inverse participation ratio
	\begin{equation}\label{key}
	I_4(Q) = \sum_j I_4\left(\psi^{(j)}\right),
	\end{equation} 
	where $I_4\left(\psi^{(j)}\right) = \sum_i |\psi^{(j)}_i|^4$ is the inverse participation ratio of the $j$th eigenvector $\psi^{(j)}$.
	Heuristically, this quantity is on the order of $\frac{N}{l_{\mathrm{av}}}$, where $l_{\mathrm{av}}$ is the average localisation length. For large $N$ we expect $l_{\mathrm{av}} = \mathcal{O}\left(\min\{b^2, N\}\right)$, as contributions from the edge eigenvectors will be negligible. We will thus have $I_4(Q) = \mathcal{O}(1)$ for $b \gg \sqrt{N}$ and $I_4(Q) = \mathcal{O}\left(\frac{N}{b^2}\right)$ for $b \ll \sqrt{N}$ in our first transition phenomena. We also expect that $I_4(Q)$ will become deterministic in the large $N$ limit because the correlation between individual inverse participation ratios $I_4(\psi^{(j)})$ is negligible. Numerical evidence (Figures \ref{fig:ipr1} and \ref{fig:ipr2}) confirms these expectations, and importantly, indicates that a \emph{second transition} occurs around $b \sim \frac{2}{5}N$. This second transition is rather difficult to detect numerically, and is only unequivocally apparent for $N \gtrsim 4000$. 
	\begin{figure}[h!]
		\centering
		\includegraphics[scale=1]{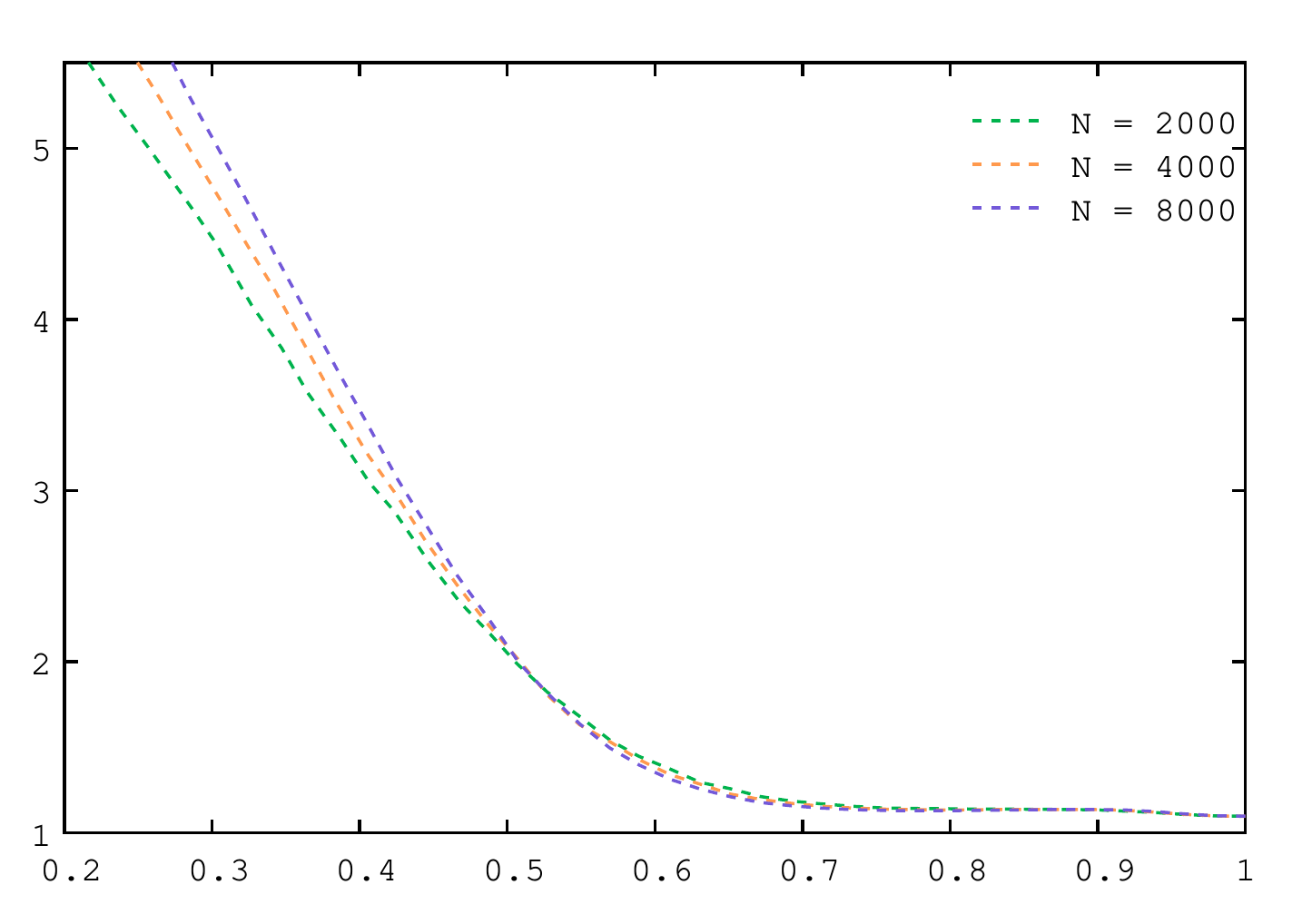}
		\caption{This figure shows the logarithm of the total inverse participation ratio $\log(I_4(Q))$ for 3 sequences of random matrices drawn from a Gaussian $\mathrm{BWE}_{N,b}$ with $N = 2000,4000,8000$, plotted against $\alpha$ with $b = N^{\alpha}$. This figure highlights the first transition: for $\alpha < \frac{1}{2}$, the gradient is approximately $-2\log(N)$, which is consistent with $I_4(Q) = \mathcal{O}\left(\frac{N}{b^2}\right)$; for $\alpha > \frac{1}{2}$, the curves are approximately flat, which is again consistent with $I_4(Q) = \mathcal{O}(1)$ in this region.}
		\label{fig:ipr1}
	\end{figure}
	\begin{figure}
		\includegraphics[scale=1]{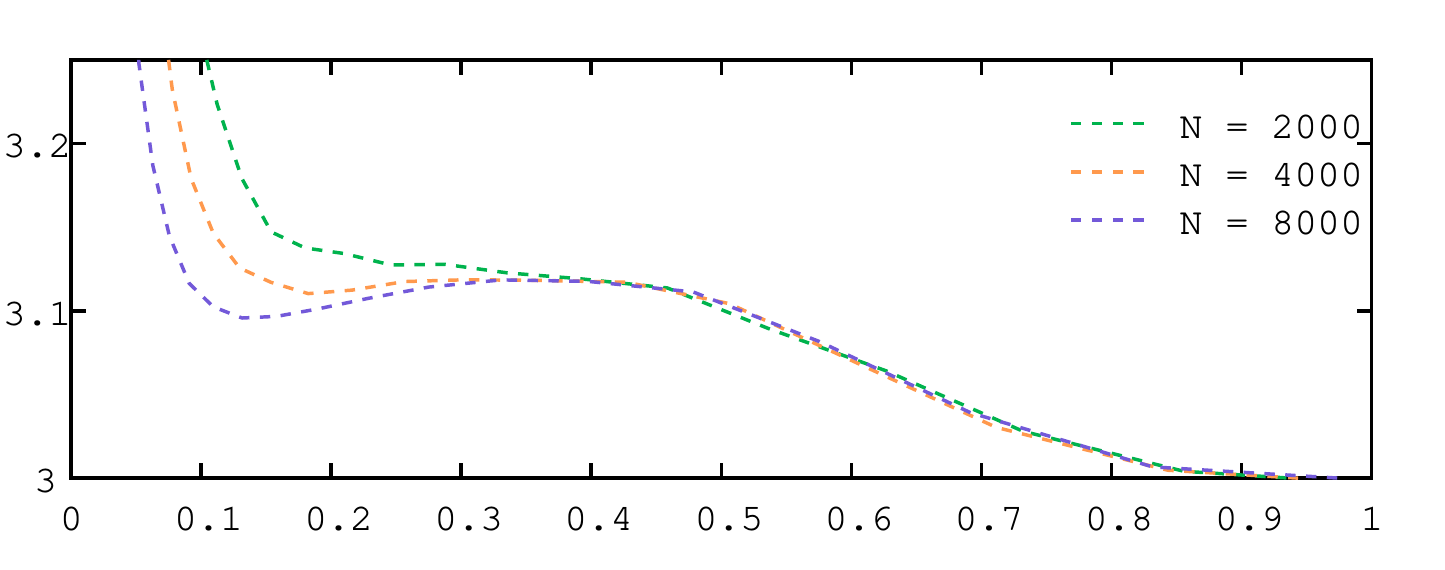}
		\caption{This figure shows the same data as Fig. \ref{fig:ipr1}, but now with a linear scale plotting $I_4(Q)$ against $c$ with $b = cN$. This highlights the second transition: a local maximum in $I_4(Q)$   can be seen to form as $N$ increases around $b \sim \frac{2}{5}N$.}
		\label{fig:ipr2}
	\end{figure}
	
		Why does this second transition occur? The reason, in short, is due to boundary effects.  Until the first transition point $b \asymp \sqrt{N}$, the localisation length grows as $l \asymp b^2$ causing $I_4(Q)$ to decay as $\mathcal{O}\left(\frac{N}{b^2}\right)$. After this point, the Anderson delocalisation mechanism becomes saturated and secondary phenomena affecting $I_4(Q)$ at the scale of $\mathcal{O}(1)$ become visible. How then do boundary effects manifest? Considering the BWE as a Hamiltonian for an Anderson hopping model, we see that the ``width'' of such a conductor is not uniform along its length: nodes near the edges are less connected than those in the middle. Specifically, if $b \le i \le N-b$ then $\mathrm{deg}(u_i) = 2(b-1)$, whereas if $i < b$ or $i > N-b$ then $b-1 \le \mathrm{deg}(u_i)  < 2(b-1)$. In the delocalisation regime, we expect that the eigenvector component fluctuations should be relatively uniform across the homogeneous central section, only changing\footnote{Whether the boundary fluctuations are smaller or larger than in the central region is irrelevant as $I_4(\psi)$ will increase in both cases due to the normalisation $\sum_i |\psi_i|^2 = 1$.} significantly in the boundary regions $i < b$, $i > N-b$. Further, as the delocalisation condition implies that $|\psi^{(j)}_i|^4 = \mathcal{O}{\left( \frac{1}{N^2}\right)}$, the fluctuations at the edge and in the middle are of the same order, i.e., they are different only up to a constant factor. Putting this all together, we see that  boundary effects only become significant when the two regions are of comparable size, i.e., when $b \asymp N$. Continuing this argument, it is easy to see that $I_4(Q)$ will attain local maximum in this regime, because although the boundary effect reduces eigenvector flatness, when $b = N$ we recover a standard Wigner ensemble, whose eigenvectors have uniform fluctuations.
		
		Heuristically, we expect that the nature of the boundary effect 
		on a given eigenvector will depend on its associated eigenvalue: eigenvectors with smaller eigenvalues should tend to have slightly more mass in the boundary region, and those will larger eigenvalues should tend to have slightly more mass in the central region.We can justify this as follows: consider the $2N \times 2N$ matrix
		\begin{equation}\label{key}
		H = \begin{bmatrix} A&0\\
		0&B
		\end{bmatrix}
		\end{equation}
		where $A \sim \mathrm{BWE}_{N,2}$ and $B \sim \mathrm{BWE}_{N,N}$. As $H$ is block diagonal, the eigenvalues of $H$ will be the same as those of $A$ and $B$. From the second moment of the level densities, the eigenvalues of $A$ will be of size $\mathcal{O}(1)$, whereas those of $B$ will be of size $\mathcal{O}(\sqrt{N})$.  Hence, in an interval of size $\mathcal{O}(1)$ around the origin, $H$ will have approximately $\mathcal{O}(N)$ eigenvalues originally from $A$, and $\mathcal{O}(\sqrt{N})$ eigenvalues originally from $B$. As a result, if we choose an eigenvalue at random in this region, with high probability the associated eigenvector will be localised, and located in the region $1 \le i \le N$. Now, consider the matrix $\hat{H} = H + P$, where P is a matrix of zeros except $p_{N,N+1} = p_{N+1,N} = p \sim \mathcal{N}(0,\frac{1}{2})$. This couples the eigenvalues and eigenvectors of $A$ and $B$ together; as a conductor, this represents a model of a thin wire attached to a conducting ``ball''. As we have only introduced a small perturbation, the eigenvectors $\hat{\psi}$ and eigenvalues $\hat{\lambda}$ of $\hat{H}$ should be close to those of $H$. For the eigenvalues, we have from the Lidskii inequality
		\begin{equation}\label{key}
		\sum_{i}^{2N} |\lambda_i - \hat{\lambda}_i|^2 \le \tr (P^2) = 2p^2 = \mathcal{O}(1).
		\end{equation}
		If we again pick an eigenvalue at random from an $\mathcal{O}(1)$ interval around the origin, in the worst case scenario we will have
		\begin{equation}\label{key}
		|\lambda_i - \hat{\lambda}_i|^2 = \mathcal{O}\left(\frac{1}{N}\right),
		\end{equation}
		i.e., $|\lambda_i - \hat{\lambda}_i| = \mathcal{O}\left(\frac{1}{\sqrt{N}}\right)$  with high probability\footnote{Of course, in the worst case scenario for a fixed eigenvalue we would have $|\lambda_i - \hat{\lambda}_i| = 2p^2$, but as we are choosing an eigenvalue at random this would give $|\lambda_i - \hat{\lambda}_i| = 0$ with high probability, which is better; the worst case is when the deviation is spread uniformly across all eigenvalues in the region of interest.}.
		The corresponding eigenvectors are also close to the original. If we assume that the localisation positions $c$ of the original (localised) eigenvectors $\psi$ are uniformly distributed over indices $1,\dots, N$, then 
		\begin{equation}\label{key}
		|\psi_i| \le C e^{\frac{-|i-c|}{l}} 
		\end{equation}
		with $c \le N-\log(N)$ with high probability. As the bandwidth is equal to $2$ in the upper region, we have $l = \mathcal{O}(1)$ and the random $C = \mathcal{O}(1)$. Then
		\begin{equation}
		\begin{split}
		(\hat{H}-\lambda)\psi &= H\psi - \lambda\psi + P\psi\\
		&= P\psi.
		\end{split}
		\end{equation}
		Taking norms, we have
		\begin{equation}
		\begin{split}
		||P\psi||_2^2 &= p^2|\psi_{N}|^2+p^2|\psi_{N+1}|^2\\
		&\le C^2|p|^2\left(e^{\frac{-2|N-c|}{l}}+ e^{\frac{-2|N+1-c|}{l}}\right)\\
		&\le 2C^2|p|^2e^{\frac{-2\left|N-N+\log(N)\right|}{l}}\\
		&= \mathcal{O}\left(N^{\frac{-2}{l}}\right),
		\end{split}
		\end{equation}
		i.e, so $||P\psi||_2 = \mathcal{O}\left(N^{\frac{-1}{l}}\right)$ with high probability.
			\begin{figure}[t!]
				\centering
				\includegraphics[scale=1]{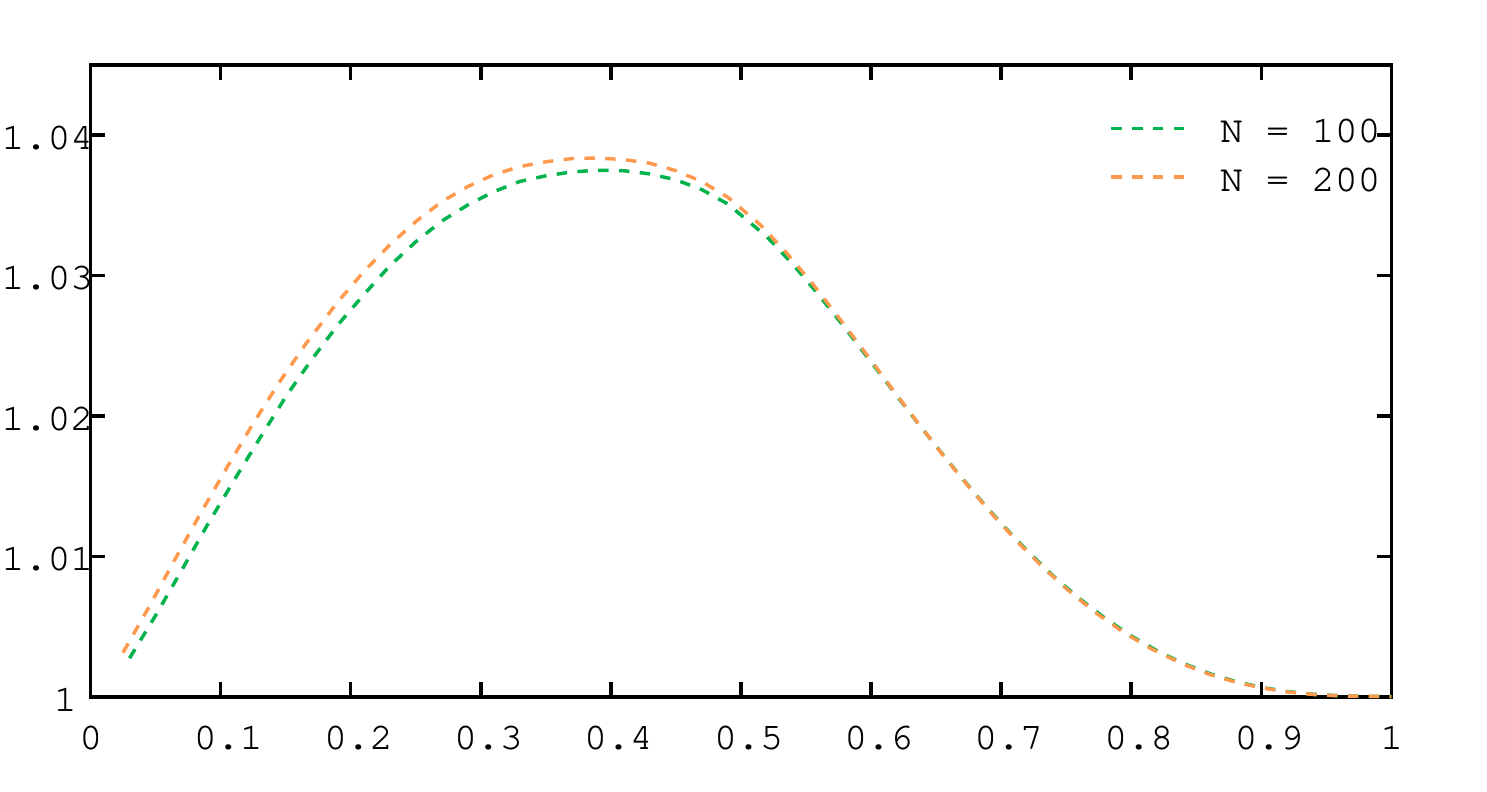}
				\caption{This figure shows $Y(Q)$ against $c$ with $b = cN$ for a Gaussian $\mathrm{BWE}_{N,b}$, $N = 100,200$. A critical point is seen near $b = \frac{2}{5}N$.}
				\label{fig:YQ}
			\end{figure}
		The end result is that small eigenvalues are likely to have eigenvectors supported in regions of the lattice where conductor is thinner (i.e., the bandwidth is smaller). From this, we can also conclude that larger eigenvalues will have eigenvectors supported in regions where the conductor is thicker, as the thin region is already completely occupied. This ``ball and chain'' model thus serves as an extreme example of what we should expect to see in the case of interest, as the conductor is thinner at the edges. Importantly, because of this combined spacial/energy structure, we should be able to separate the boundary effects from the stochastic effects by examining
		\begin{equation}\label{key}
		Y(Q) = \sum_{i,j} \E\left(|\psi_i^{(j)}|^2\right)^2.
		\end{equation}
		Let us first consider the two limiting cases, when $b = 1$ and $b = N$. In both cases, $Y(Q)$ will be equal to 1, because $\E\left(|\psi_i^{(j)}|^2\right)^2 = \frac{1}{N^2}$ for all $i,j$. If our heuristics are correct about the nature of the boundary effect for $1 < b < N$,
		then near the boundary $\E\left(|\psi_i^{(j)}|^2\right)^2 > \frac{1}{N^2}$ for small eigenvalues and $\E\left(|\psi_i^{(j)}|^2\right)^2 < \frac{1}{N^2}$ for large eigenvalues; the net effect should become significant when $b \asymp N$, causing $Y(Q)$ to reach a maximum somewhere in this regime. Figure \ref{fig:YQ} shows a numerical simulation of $Y(Q)$ as a function of the bandwidth for several choices of $N$. The transition is now clear, and remarkably, occurs exactly where we expect it at $b \sim \frac{2}{5}N$. 
	
\section{Conclusion}
\begin{figure}[t!]
	\centering
	\includegraphics[scale=1]{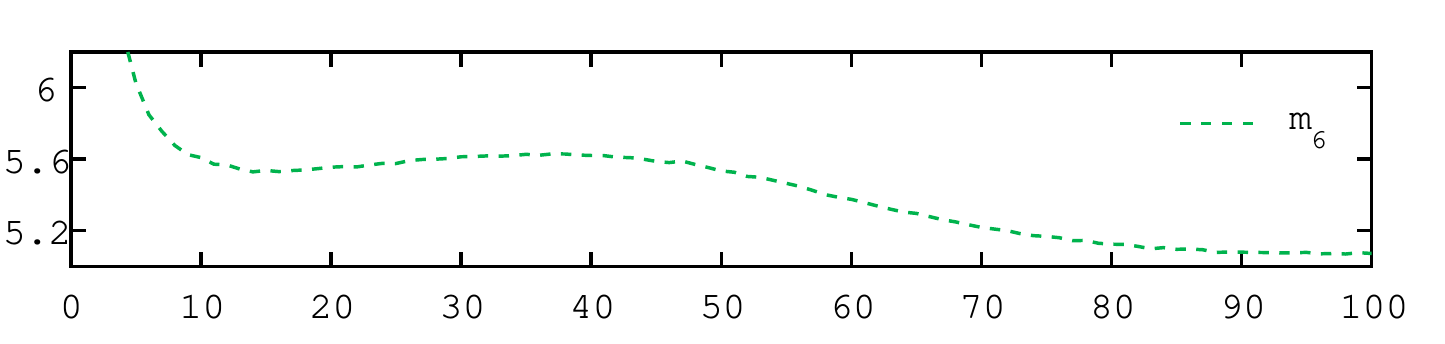}
	\label{fig:m6}
	\includegraphics[scale=1]{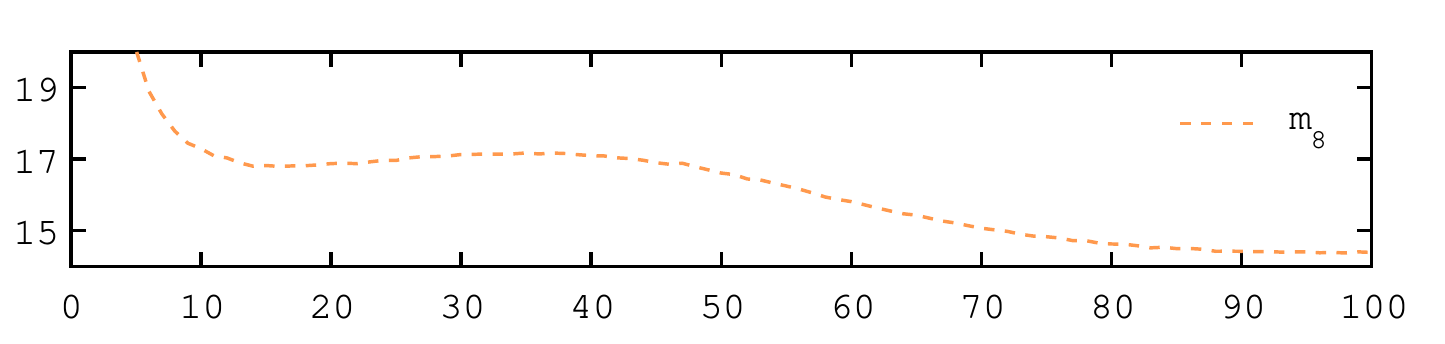}
	\label{fig:m8}
	\includegraphics[scale=1]{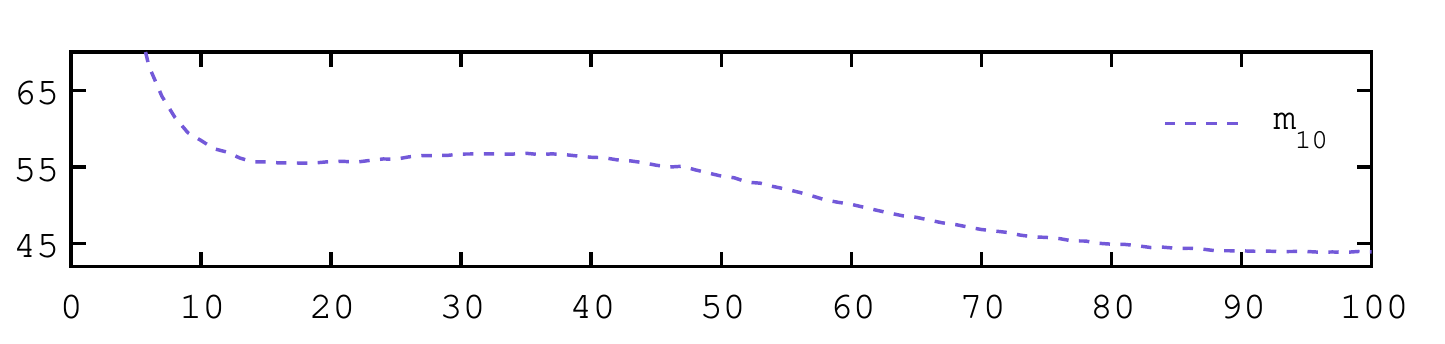}
	\caption{Higher order moments for a Gaussian BWE with $N = 100$. Again, critical points are seen near $b = \sqrt{\frac{3N}{2}}$ and $b = \frac{2}{5}N$. Numerical results for other four moment matching BWEs were similar.}
	\label{fig:mhigh}
\end{figure}
	We have shown that the fourth moment of the normalised eigenvalue level density for banded matrices has a critical point at precisely the  scaling predicted by the Poisson/Gaudin--Mehta conjecture.  A second critical point  led to a conjecture on an as-of-yet unknown second phase transition in the eigenvector localization, whose existence we demonstrated numerically. These results raises several questions:
\begin{enumerate}
	\item Is there evidence of a phase transition in higher order moments?  Numerical results for the Gaussian BWE suggest yes, see Figure \ref{fig:mhigh}.
	\item Is there evidence of a phase transition in the local statistics at $b \sim1 \frac{2}{5}N$? We were unable to find numerical evidence of this, but given the lack of flatness of the eigenvectors in this regime, the existence of such a transition is plausible.
	\item Is there evidence in the global statistics of the phase transition of the largest eigenvalue at $N^{5/6}$ \cite{Sodin2010}?  We have not detected any sign of this transition in numerical experiments, and we suspect that this transition is too localized to have an observable affect on the global statistics.
\end{enumerate}

\section{Appendix}
\begin{proof}[Proof of Lemma 1]
	We first note that the trace and expectation operators commute
	\begin{equation}
	\mathbb{E}\left(\tr\left (A\right )\right) = \tr\left (\mathbb{E}\left(A\right)\right )
	\end{equation}
	for all matrices $A$. This is easy to see as a result of the linearity of expectation:
	\begin{align*}
	\E\left (\tr\left (A\right )\right ) = \E \bigg(\sum_i a_{ii}\bigg) = \sum_i \E\left(a_{ii}\right) = \tr \left (\E\left (A\right )\right).
	\end{align*}
	This result will be useful as it allows us to compute the expectation of independent matrices before taking their product, e.g., 
	\begin{equation}
	\E(\tr(AB)) = \tr(\E(AB)) = \tr(\E(A)\E(B)).
	\end{equation}
	
	The left hand side of equation \ref{A4} may be expanded as
	\begin{equation}
	\E \tr(A^4) = \sum_{i,j}^b \E \tilde{a}_{ij}^{2} = \sum_i^b \E \tilde{a}_{ii}^{2} + \sum_{i\ne j}^b \E \tilde{a}_{ij}^2
	\end{equation}
	where $\tilde{a}_{ij}$ are the elements of the matrix $A^2$. For the diagonal elements we have
	\begin{equation}  \begin{split}
	\E (\tilde{a}_{ii}^2) &= \E \sum_{k = 1}^{b} a_{ik}^4 + \E \sum_{k \ne l}^b a_{ik}^2a_{il}^2\\
	&= b \E a^4 + b(b-1) (\E a^2)^2 \\
	&= 3b + b(b-1).
	\end{split} \end{equation}
	For the off diagonal elements we have
	\begin{equation}
	\E (\tilde{a}_{ij}^2) = \E \sum_{k = 1}^{b} a_{ik}^2a_{jk}^2 + \E \sum_{k \ne l}^b \sum_{l = 1}^{b} a_{ik}a_{jk}a_{il}a_{jl}.
	\end{equation}
	As $i \ne j$ and $k \ne l$, every term of second sum has at least two elements which are different and thus has expectation zero. Therefore
	\begin{equation}
	\E (\tilde{a}_{ij}^2) = b \E a_1^2a_2^2 = b.
	\end{equation}
	The fourth moment of $A$ is hence
	\begin{equation}\label{p1term1} \begin{split}
	\E \tr (A^4) &= \sum_i^b 3b + b(b-1) + \sum_{i \ne j}^b b\\
	&=  b(3b + b(b-1)) + b^2(b-1)\\
	&= 2b^3 + b^2.
	\end{split} \end{equation}
	
	For equation \eqref{A2LLT}, we use the commutativity of the trace and expectation operators together with the independence of $A$ and $L$ to give
	\begin{equation}
	\E \tr (A^2LL^\top) = \tr( \E A^2 \E (L L^\top)).
	\end{equation}
	The expectation of $A^2$ is diagonal as
	\begin{equation} \begin{split}
	\E \tilde{a}_{ii} &= \sum_{k = 1}^b \E a_{ik}^2 = b\\
	\E \tilde{a}_{ij} &= \sum_{k = 1}^b \E a_{ik}\E a_{jk} = 0.
	\end{split} \end{equation}
	Denoting the entries of $L L^\top$ by $\tilde{l}$ and using the fact that $l_{ij}$ is zero if $i \le j$, the diagonal elements are
	\begin{equation}
	\tilde{l}_{ii} = \sum_{k = 1}^b l_{ik}^2 = \sum_{k = 1}^{i-1} l_{ik}^2,
	\end{equation}
	the upper triangular elements $i < j$ are
	\begin{equation}
	\tilde{l}_{ij} = \sum_{k = 1}^{b} l_{ik}l_{jk} = \sum_{k = 1}^{i-1} l_{ik}l_{jk},
	\end{equation}
	and the lower triangular elements $i > j$ are
	\begin{equation}
	\tilde{l}_{ij} = \sum_{k = 1}^{b} l_{ik}l_{ij} = \sum_{k = 1}^{j-1} l_{ik}l_{jk} = \tilde{l}_{ji}.
	\end{equation}
	Taking expectations gives
	\begin{equation}\label{LLtop} \begin{split}
	\E \tilde{l}_{ii} &= i-1\\
	\E \tilde{l}_{ij} &= 0
	\end{split} \end{equation}
	and thus $\E L L^\top$ is also diagonal. We hence have
	\begin{equation}\label{p1term2}\begin{split}
	\E \tr (A^2LL^\top) &= \sum_{i=1}^b \E \tilde{a}_{ii} \E \tilde{l}_{ii} = \sum_{i=1}^b b (i-1) = \frac{b^3 - b^2}{2}.
	\end{split} \end{equation}
	
	Similarly, for equation \eqref{A2LTL} we have
	\begin{equation}
	\E \tr (A^2L^\top L) =  \tr(\E A^2 \E (L L^\top)).
	\end{equation}
	The diagonal entries of $L L^\top$ are
	\begin{equation}
	\tilde{l}_{ii} = \sum_k^b l_{ki}^2 = \sum _{k = i+1}^b l_{ki}^2
	\end{equation}
	and the upper/lower triangular elements are
	\begin{equation}
	\tilde{l}_{ij} = \sum_{k=1}^b l_{ki}l_{kj} = \sum_{k = i+1}^{b}l_{ki}l_{kj} = \tilde{l}_{ji}.
	\end{equation}
	Taking expectations gives
	\begin{equation}\label{LtopL} \begin{split}
	\E \tilde{l}_{ii} &= b-i\\
	\E \tilde{l}_{ij} &= 0
	\end{split} \end{equation}
	so $\E LL^\top$ is diagonal. Hence
	\begin{equation}\label{p1term3}
	\E \tr (A^2L^\top L) = \sum_{i=1}^b \E \tilde{a}_{ii} \E \tilde{l}_{ij} = \sum_{i=1}^b b(b-i) = \frac{b^3-b^2}{2}.
	\end{equation}
	
	Using \eqref{LLtop} and \eqref{LtopL}, equation \eqref{L1L2} is
	\begin{equation}\label{p1term4} \begin{split}
	\E \tr(L_1^\top L_1 L_2 L_2^\top) &= \tr(\E L^\top L \E L L^\top)\\
	&= \sum_{i = 1}^b (i-1)(b-i)\\
	&= \frac{b^3 - 3b^2 + 2b}{6}.
	\end{split} \end{equation}
	
	Finally, equation \eqref{LLLL} may be expressed as
	\begin{equation}
	\E \tr(L L^\top L L^\top) = \E \tr(L L^\top (L L^\top)^\top) =  \sum_{i,j}^b \E \tilde{l}_{ij}^2 = \sum_{i}^b \E \tilde{l}_{ii}^2 + \sum_{i \ne j}^b\E \tilde{l}_{ij}^2
	\end{equation}
	where the $\tilde{l}_{ij}$ are the elements of $L L^\top$. The terms in the first sum are
	\begin{equation} \begin{split}
	\E \tilde{l}_{ii}^2 &= \E \sum_{k=1}^{i-1} l_{ik}^4  + \E \sum_{k\ne m}^{i-1} l_{ik}^2l_{im}^2\\
	&= (i-1)\E l^4 + (i-1)(i-2) (\E l^2)^2\\
	& = 3(i-1) + (i-1)(i-2),
	\end{split} \end{equation}
	and those in the second sum are
	\begin{equation} \begin{split}
	\E \tilde{l}_{ij}^2 & = \E \sum_{k=1}^{i-1} l_{ik}^2l_{ij}^2  + \E \sum_{k\ne m}^{i-1} \sum_{ m}^{i-1} l_{ik}l_{jk}l_{im}l_{jm}\\
	&= (i-1).
	\end{split} \end{equation}
	Thus
	\begin{equation}\label{p1term5} \begin{split}
	\E \tr(L L^\top L L^\top) &=  \sum_{i}^b \E 3(i-1) + (i-1)(i-2) + \sum_{i \ne j}^b\E (i-1)\\
	&= \frac{4b^3 -3b^2 -b}{6}.
	\end{split} \end{equation}
\end{proof}

	\bibliography{newbib}

\end{document}